\DeclareMathOperator{\pnt}{\raise 0.5mm \hbox{\large\textbf{.}}}
\newcommand{\note}[2][ ]{}
\newtheorem{theorem}{Theorem}
\newtheorem{lemma}[theorem]{Lemma}
\newtheorem{proposition}[theorem]{Proposition}
\theoremstyle{definition}
\title{Zeilberger's KOH theorem and the strict\\ unimodality of $q$-binomial coefficients}
\author{Fabrizio Zanello} \address{Department of Mathematics\\ MIT\\ Cambridge, MA 02139-4307 {\tiny and} Department of Mathematical  Sciences\\ Michigan Tech\\ Houghton, MI  49931-1295}
\email{zanello@math.mit.edu, zanello@mtu.edu}
\thanks{2010 {\em Mathematics Subject Classification.} Primary: 05A15;
  Secondary: 05A17.\\\indent 
{\em Key words and phrases.} $q$-binomial coefficient; Gaussian polynomial; unimodality.}
\begin{document}
\maketitle

\begin{abstract}
A recent nice result due to I. Pak and G. Panova is the strict unimodality of the $q$-binomial coefficients $\binom{a+b}{b}_q$ (see \cite{PP} and also \cite{PP2} for a slightly revised version of their theorem). Since their proof used representation theory and Kronecker coefficients,  the authors also asked for an argument that would employ Zeilberger's KOH theorem. In this note, we give such a proof. Then, as a further application of our method, we also provide a short proof of their conjecture that the difference between consecutive coefficients of $\binom{a+b}{b}_q$ can get arbitrarily large, when we assume that $b$ is fixed and $a$ is large enough.
\end{abstract}

{\ }\\\indent
A sequence $c_1, c_2,\dots, c_t$ is \emph{unimodal} if it does not increase strictly after a strict decrease. It is \emph{symmetric} if $c_i=c_{t-i}$ for all $i$. The unimodality of the \emph{$q$-binomial coefficient}

$$\binom {a+b}{b}_q= \frac{(1-q)(1-q^2)\cdots (1-q^{a+b})}{(1-q)(1-q^2)\cdots (1-q^a)\cdot (1-q)(1-q^2)\cdots (1-q^{b})},$$
which is easily proven to be a symmetric polynomial in $q$, is a classical and highly nontrivial  result in combinatorics. It was first shown in  1878  by J.J. Sylvester, and has since received a number of other interesting proofs (see e.g. \cite{Pr1,St5,Sy}). In particular, a celebrated paper  of K. O'Hara \cite{Oh} provided a combinatorial proof for the unimodality of $\binom {a+b}{b}_q$. O'Hara's argument was subsequently expressed in algebraic terms by D. Zeilberger \cite{Z2} by means of the beautiful \emph{KOH identity}. This identity decomposes $\binom {a+b}{b}_q$ into a finite sum of polynomials with nonnegative integer coefficients, which are all unimodal and symmetric about $ab/2$. 

More precisely, fix  integers $a\ge b\ge 2$. For any given partition $\lambda=(\lambda_1,\lambda_2,\dots)$ of $b$, set $Y_i= \sum_{j=1}^i\lambda_j$ for all $i\geq 1$, and $Y_0=0$. Then the KOH theorem can be stated as follows:

\begin{lemma}[KOH]\label{koh} $\binom{a+b}{b}_q=\sum_{\lambda\vdash b}F_{\lambda}(q)$, where
$$F_{\lambda}(q)= q^{2\sum_{i\geq 1}\binom{\lambda_i}{2}} \prod_{j\geq 1} \binom{j(a+2)-Y_{j-1}-Y_{j+1}}{\lambda_j-\lambda_{j+1}}_q.$$
\end{lemma}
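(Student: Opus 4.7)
The plan is to follow Zeilberger's algebraic reformulation of O'Hara's original combinatorial argument, since both sides of the identity are polynomials in $q$ with nonnegative integer coefficients. The basic strategy is induction on $b$. The base cases $b=1$ and $b=2$ follow by direct enumeration of the partitions of $b$ and matching the resulting $F_\lambda(q)$ against the expansion of $\binom{a+1}{1}_q$ and $\binom{a+2}{2}_q$ respectively; for example, when $b=1$ the only partition is $\lambda=(1)$, giving $Y_0=0$, $Y_1=1$, $Y_2=0$, so that all factors with $j\geq 2$ are trivial and $F_{(1)}(q)=\binom{a+1}{1}_q$.

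For the inductive step with $b\geq 3$, I would stratify the sum $\sum_{\lambda\vdash b}F_\lambda(q)$ according to the value of the first part $\lambda_1=k$, so that the tail $(\lambda_2,\lambda_3,\dots)$ ranges over partitions of $b-k$ with parts at most $k$. The $j=1$ factor of $F_\lambda(q)$ separates out as $\binom{a+2-\lambda_2}{k-\lambda_2}_q$, and the prefactor contributes $q^{2\binom{k}{2}}$ from $\lambda_1$. The goal is then to recognize the remaining inner sum, after this separation, as a shifted instance of the KOH identity applied to the conjugate situation on $b-k$ with a modified value of $a$. Equivalently, one may interpret both sides as generating functions for partitions inside the $a\times b$ rectangle and give a bijective proof via O'Hara's chain decomposition, in which a given partition $\mu\subseteq a\times b$ is iteratively peeled off by removing its largest possible ``hook-like'' layer indexed by $\lambda$.

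The main obstacle, and where the substance of the argument lives, will be establishing the auxiliary $q$-series identity that makes the reduction go through: the arguments $j(a+2)-Y_{j-1}-Y_{j+1}$ shift in a non-trivial way as one peels off $\lambda_1$, and forcing the resulting binomial products to recombine into $\binom{a+b}{b}_q$ requires a careful iterated application of the $q$-Chu--Vandermonde convolution (or, in O'Hara's version, an explicit chain decomposition whose level sizes are exactly the binomials $\binom{j(a+2)-Y_{j-1}-Y_{j+1}}{\lambda_j-\lambda_{j+1}}_q$ and whose minimal elements contribute the $q^{2\sum_i\binom{\lambda_i}{2}}$ prefactor). I would carry out this step last, after the algebraic skeleton of the induction is in place, and would verify by comparing low-degree coefficients on both sides in a small example such as $a=b=3$ that the bookkeeping in the shifts is correct before attempting the general manipulation.
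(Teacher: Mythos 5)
First, a point of comparison: the paper does not prove Lemma \ref{koh} at all. It is quoted as a known theorem, due to O'Hara \cite{Oh} in combinatorial form and to Zeilberger \cite{Z2} in the algebraic form stated here, and the note only uses it as a black box. So there is no ``paper proof'' to match yours against; the relevant benchmark is Zeilberger's argument in \cite{Z2}.

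Measured against that, your proposal has a genuine gap rather than being a complete alternative route. Everything that makes the KOH identity true is concentrated in the step you explicitly postpone: showing that, after stratifying by $\lambda_1=k$, the inner sum recombines --- via the shifted arguments $j(a+2)-Y_{j-1}-Y_{j+1}$ --- into a smaller instance of the identity, or equivalently exhibiting O'Hara's chain decomposition with exactly the stated level sizes and the prefactor $q^{2\sum_i\binom{\lambda_i}{2}}$. You yourself identify this as ``where the substance of the argument lives'' and defer it; as written, nothing in the proposal establishes it, and the induction on $b$ you set up does not obviously close: the peeled-off problem is not simply the KOH identity for $b-k$ with a modified $a$, because the constraint that the remaining parts are at most $k$ changes the shape of the statement, which is precisely why O'Hara's proof is delicate. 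Until that reduction is actually carried out, what you have is a plan, not a proof.

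There are also two computational slips worth fixing before attempting the hard step. For $b=1$ and $\lambda=(1)$ one has $Y_2=\lambda_1+\lambda_2=1$, not $0$; with $Y_2=0$ your own formula would yield $\binom{a+2}{1}_q$ rather than the claimed $\binom{a+1}{1}_q$. Likewise the $j=1$ factor is $\binom{a+2-Y_0-Y_2}{\lambda_1-\lambda_2}_q=\binom{a+2-k-\lambda_2}{k-\lambda_2}_q$, not $\binom{a+2-\lambda_2}{k-\lambda_2}_q$. Errors in the bookkeeping of the $Y_j$ are exactly what will derail the recombination step, so they should be corrected first.
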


A recent nice result shown by I. Pak and G. Panova is a characterization of the \emph{strict} unimodality of  $q$-binomial coefficients; i.e., they determined when $\binom {a+b}{b}_q$ \emph{strictly} increases from degree 1 to degree $\lfloor ab/2\rfloor $ (see \cite{PP}, and also \cite{PP2} for a slightly revised version of the theorem). Since their argument employed the algebraic machinery of Kronecker coefficients, the authors asked whether a proof could also be given that uses Zeilberger's KOH identity. We do this in the present note. Then, as a further pithy application of this method, using the KOH theorem we also give a very short proof of a conjecture stated in the same papers, on the unbounded growth of the difference between consecutive coefficients of $\binom {a+b}{b}_q$.

The next lemma is a trivial and probably well-known fact of which we omit the proof.
\begin{lemma}\label{111}
Let $c$ and $d$ be positive integers such that the $q$-binomial coefficient $\binom{c+d}{d}_q$ is strictly unimodal. Then, for any positive integer $t\le cd$ such that $t\ne cd-2$, the product $\binom{c+d}{d}_q\binom{t+1}{1}_q$ is strictly unimodal (in all nonnegative degrees).
\end{lemma}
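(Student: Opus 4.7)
The product is $s(q) := \binom{c+d}{d}_q \cdot \binom{t+1}{1}_q$; since $\binom{t+1}{1}_q = 1 + q + \cdots + q^t$, its coefficients $s_i$ are sliding-window sums of the coefficients $p_i$ of $p(q) := \binom{c+d}{d}_q$, satisfying $s_i - s_{i-1} = p_i - p_{i-t-1}$ (with the convention $p_j = 0$ for $j \notin [0, cd]$). Writing $N := cd$ and $M := \lfloor (N+t)/2 \rfloor$, my plan is to reduce strict unimodality of $s$ to showing the pointwise inequalities $p_i > p_{i-t-1}$ for all $i \in [1, M]$. The reduction uses the symmetry of $s$ about $(N+t)/2$; in particular, when $N+t$ is odd, symmetry already forces $s_M = s_{M+1}$, so the family of strict inequalities above is precisely what strict unimodality requires.

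I would then verify these inequalities by a short case analysis, using only the standard facts $p_0 = p_1 = 1$, $p_j = p_{N-j}$, and the hypothesis $p_1 < p_2 < \cdots < p_m$ with $m := \lfloor N/2 \rfloor$. Set $v := i - t - 1$. The case $i \le t$ is immediate since $p_v = 0 < p_i$. For $i \ge t+1$, a quick computation from $i \le M$ gives $v \in [0, m]$, and I would split on whether $i \le m$ or $i > m$. In the former, both indices lie in $[0, m]$ and $i - v = t+1 \ge 2$, so strict unimodality on $[1, m]$ (noting that the only tie $p_a = p_b$ on $[0, m]$ with $a > b$ is $(a, b) = (1, 0)$, precluded by $i - v \ge 2$) yields $p_i > p_v$. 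In the latter, the symmetry $p_i = p_{N-i}$ reduces the comparison to $p_{N-i}$ versus $p_v$ on $[0, m]$, with $N - i > v$ following from $i + v \le N - 1$ (an easy consequence of $i \le M$).

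The key obstacle---and the place where $t = cd - 2$ must be excluded---is the second sub-case above: the comparison $p_{N-i} > p_v$ can fail only when $(N-i, v) = (1, 0)$, the unique tie on $[0, m]$. This forces $i = N - 1$ and $v = 0$, i.e., $t = N - 2 = cd - 2$. Thus the single excluded value of $t$ is precisely the place where the intrinsic tie $p_0 = p_1$ of the $q$-binomial coefficient reappears through the sliding window; for every other admissible $t$, the strict inequalities all hold, and $s$ is strictly unimodal. The proof is thus essentially bookkeeping to isolate this single exceptional configuration as the unique source of failure.
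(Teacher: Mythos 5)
Your proof is correct and complete. Note that the paper itself offers no proof to compare against: Lemma \ref{111} is dismissed there as ``a trivial and probably well-known fact of which we omit the proof,'' so you are filling a gap rather than diverging from the author's argument. Your route is the natural one: the identity $s_i - s_{i-1} = p_i - p_{i-t-1}$ coming from multiplication by $1+q+\cdots+q^t$ reduces everything to the pointwise comparisons $p_i > p_{i-t-1}$ for $1 \le i \le \lfloor (cd+t)/2 \rfloor$, and your bookkeeping is sound: the bound $i \le \lfloor (N+t)/2 \rfloor$ is exactly equivalent to $N - i > i - t - 1$, so the reflected index always strictly dominates, and the only way a comparison can degenerate to equality is through the intrinsic tie $p_0 = p_1$ of a Gaussian binomial, which occurs precisely when $t = cd - 2$. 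This correctly explains why that single value of $t$ must be excluded, which is the one genuinely non-obvious feature of the statement. The only (very minor) point worth a sentence in a polished write-up is the degenerate regime $\lfloor cd/2 \rfloor \le 1$, where the chain $p_1 < p_2 < \cdots$ is empty; there the relevant sub-cases of your analysis are vacuous and the claim is checked directly, and in any event the paper only invokes the lemma for large parameters.
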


\begin{theorem}[\cite{PP,PP2}]\label{str}
The $q$-binomial coefficient $\binom{a+b}{b}_q$ is strictly unimodal if and only if $a=b=2$ or $b\ge 5$, with the exception of$$(a,b)=(6,5), (10,5), (14,5), (6,6), (7,6), (9,6), (11,6), (13,6), (10,7).$$
\end{theorem}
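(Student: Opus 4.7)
The plan is to combine the KOH identity (Lemma~\ref{koh}) with Lemma~\ref{111} to exhibit, for each degree $k$ in the left half of $\binom{a+b}{b}_q$, a summand $F_\lambda$ of the KOH decomposition that strictly increases from $q^{k-1}$ to $q^k$. Throughout we may assume $a\ge b$, by the symmetry $\binom{a+b}{b}_q=\binom{a+b}{a}_q$. The ``only if'' direction amounts to exhibiting a flat spot in the coefficient sequence of $\binom{a+b}{b}_q$ for each excluded pair --- namely $b\in\{1,3,4\}$, $b=2$ with $a\ne 2$, and the nine exceptional pairs --- a finite and routine direct computation.

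For the ``if'' direction, we proceed by induction on $b$, with $b\in\{5,6,7\}$ as base cases. Writing $\binom{a+b}{b}_q=\sum_{\lambda\vdash b}F_\lambda(q)$ via Lemma~\ref{koh}, each $F_\lambda$ is symmetric about $ab/2$ and unimodal, so strict unimodality of the sum is equivalent to: for every $k$ with $2\le k\le\lfloor ab/2\rfloor$, at least one summand $F_\lambda$ satisfies $[q^k]F_\lambda>[q^{k-1}]F_\lambda$. The principal summand we exploit is $\lambda=(b-1,1)$, for which a direct KOH computation gives
\[
F_{(b-1,1)}(q)=q^{(b-1)(b-2)}\binom{a-b+2}{b-2}_q\binom{2a-2b+5}{1}_q.
\]
Set $c=a-2b+4$, $d=b-2$, $t=2a-2b+4$. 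Then Lemma~\ref{111} yields strict unimodality of $F_{(b-1,1)}$ on its support $[(b-1)(b-2),\,ab-(b-1)(b-2)]$, provided (i)~$\binom{c+d}{d}_q$ is strictly unimodal (available from the induction hypothesis once $b-2\ge 5$) and (ii)~the side condition $t\ne cd-2$, which reduces to the Diophantine equation $a(b-4)=2b^2-10b+14$ whose isolated integer solutions directly contribute members such as $(14,5)$ and $(13,6)$ to the exception list. To cover the complementary low-degree band $[2,(b-1)(b-2)-1]$, we invoke partitions $\lambda=(k,1^{b-k})$ for $k=2,\dots,b-2$; each such $F_\lambda$ is a product of two one-row $q$-binomials of minimum degree $k(k-1)$, strictly increasing on an interval starting near $q^{k(k-1)}$, and collectively these intervals cover the missing degrees.

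The main obstacle is the base of the induction, $b\in\{5,6,7\}$, where $b-2\in\{3,4,5\}$ places the needed inductive statement outside (or at the boundary of) the theorem's strict-unimodality regime. In these base cases the principal partition $(b-1,1)$ is replaced by $\lambda=(b)$, giving $F_{(b)}(q)=q^{b(b-1)}\binom{a-b+2}{b}_q$ and transferring the argument to a secondary induction on the first parameter $a-2b+2$. The listed exceptional pairs arise precisely as the finitely many base-case $(a,b)$ for which neither principal summand is strictly unimodal and no auxiliary partition fills the strict-increase gap at the middle degrees; pinning down exactly this list is the delicate combinatorial heart of the argument, while symmetry about $ab/2$ extends everything from the left half to the right.
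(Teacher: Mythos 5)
Your overall architecture --- each KOH summand is unimodal and symmetric about $ab/2$, so it suffices to exhibit for every degree $k\le \lfloor ab/2\rfloor$ one summand that strictly increases there; feed Lemma \ref{111} with an induction; fall back on $\lambda=(b)$ and an induction on $a$ for small $b$; dispose of the exceptional pairs by finite checking --- is exactly the paper's. The gap is in your choice of principal partition. The KOH term of $\lambda=(b-1,1)$ is $q^{(b-1)(b-2)}\binom{a+2-b}{b-2}_q\binom{2a-2b+5}{1}_q$, and the factor $\binom{a+2-b}{b-2}_q$ is \emph{identically zero} unless $a+2-b\ge b-2$, i.e.\ $a\ge 2b-4$; so for the entire range $b\le a\le 2b-5$ (nonempty for every $b\ge 5$, e.g.\ $(a,b)=(20,20)$) your principal summand contributes nothing to the middle degrees. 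Even when it is nonzero, the induction hypothesis requires the reduced pair $(a-2b+4,\,b-2)$ to lie in the strictly unimodal regime and to avoid the nine exceptions --- e.g.\ $(a,b)=(16,7)$ reduces to the exceptional $(6,5)$ --- and you never address this. The same vanishing kills your auxiliary hooks $(k,1^{b-k})$ for large $k$: their main factor is $\binom{a+1-k}{k-1}_q$ (not a one-row $q$-binomial for $k\ge 3$, incidentally), which is zero once $a<2k-2$, so the claimed coverage of $[2,(b-1)(b-2)-1]$ also fails when $a$ is close to $b$. Your Diophantine condition $a(b-4)=2b^2-10b+14$ is a pleasant heuristic for why $(14,5)$ and $(13,6)$ are exceptional, but note that $(14,7)$ also solves it and is not an exception, so it cannot be used to generate the exception list.

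The paper's fix is precisely to avoid hook partitions in the inductive step on $b$: it uses partitions into three (nearly) equal parts, $(b/3,b/3,b/3)$ and its variants for $b\equiv 1,2\pmod 3$, whose KOH term is $q^{b(b-3)/3}\binom{(3a-2b+6)+b/3}{b/3}_q(\cdots)$. The reduced first parameter $3a-2b+6\ge b+6\ge 21$ is automatically large compared with the reduced second parameter $b/3$ for \emph{every} $a\ge b\ge 15$, so the induction never leaves the safe regime and never lands on an exceptional pair. The low-degree band $[1,\,ab/2-a]$ is covered separately by the partition $(2,\dots,2,1,1)$ together with the strict growth of the partition function up to degree $b-2$, and the residual cases $5\le b\le 17$ are handled, as you propose, by $\lambda=(b)$ and induction on $a$ for $a\ge 2b+13$, with the finitely many smaller $a$ checked directly. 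You would need to replace your principal partition by something of this three-part type (or otherwise supply summands that survive when $a$ is comparable to $b$) before the argument closes.
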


\begin{proof}
We can assume that $b\ge 5$, otherwise, as it is also noted in \cite{PP,PP2}, the result is easy to show. By Lemma \ref{koh}, since all  terms in the KOH decomposition of $\binom{a+b}{b}_q$ are unimodal and symmetric with respect to $ab/2$, in order to show that $\binom{a+b}{b}_q$ is \emph{strictly} unimodal, it clearly suffices to determine, for each positive degree up to $ab/2$, some suitable KOH term that is strictly increasing in that degree. We begin by showing that, for any $a\ge b\ge 2$, $\binom{a+b}{b}_q$  strictly increases up to degree $ab/2 -a$ for $b$  even, and up to degree $ab/2 -a/2$ for $b$  odd. 

Let $b=2m$ be even. Then the KOH term contributed by the partition $\lambda=(\lambda_1=2,\dots,\lambda_{m-1}=2,\lambda_m=1, \lambda_{m+1}=1)$ of $b$ is given by:

$$F_{\lambda}(q)=q^{2(m-1)}\binom{(m-1)(a+2)-2(m-2)-(2m-1)}{1}_q\binom{(m+1)(a+2)-(2m-1)-2m}{1}_q$$$$=q^{b-2}\binom{ab/2-a-b+3}{1}_q\binom{ab/2+a-b+3}{1}_q.$$

Notice that the product of the last two $q$-binomial coefficients is  strictly increasing (by 1) from degree 0 to degree $ab/2-a-b+2$. Also, $\binom{a+b}{b}_q$ is clearly strictly increasing from degree 1 to degree $b-2$, since so is the usual partition function $p(n)$ (see e.g. \cite{ec1}, Chapter 1). From this, we easily have that $\binom{a+b}{b}_q$   strictly increases from degree 1 to degree $(ab/2-a-b+2)+(b-2)=ab/2-a$. 

The proof for $b=2m+1$ odd, giving us that $\binom{a+b}{b}_q$ is  strictly increasing  up to degree $ab/2 -a/2$, is  similar (using $\lambda=(\lambda_1=2,\dots,\lambda_{m}=2,\lambda_{m+1}=1)$) and thus will be omitted.

Now, in order to show that for the desired values of $a$ and $b$, $\binom{a+b}{b}_q$  strictly increases in each of the remaining degrees up to $ab/2$, we consider three cases depending on the residue of $b$ modulo 3. We start with $b\equiv 0$ modulo 3, and assume that $b\ge 15$. The KOH term corresponding to the partition $\lambda=(b/3,b/3, b/3)$ of $b$ is given by:

$$F_{\lambda}(q)=q^{6\binom{b/3}{2}}\binom{3(a+2)-2b/3-b}{b/3}_q=q^{b(b-3)/3}\binom{(3a-2b+6)+b/3}{b/3}_q.$$

Notice that $b(b-3)/3<ab/2-a$, and $3a-2b+6\ge 15$. Thus, it easily follows by induction that, for $b\ge 15$, the strict unimodality of $\binom{(3a-2b+6)+b/3}{b/3}_q$ implies that of $\binom{a+b}{b}_q$, as desired.

Let now $b\equiv 1$ modulo 3, and assume $b\ge 19$. By considering the partition $\lambda=((b-1)/3,(b-1)/3, (b-1)/3,1)$ of $b$, we get:

$$F_{\lambda}(q)=q^{(b-1)(b-4)/3}\binom{3a-2b+8+(b-4)/3}{(b-4)/3}_q\binom{4a-2b+9}{1}_q.$$

It is easy to check that, under the current assumptions on $a$ and $b$, we have $(b-1)(b-4)/3<ab/2-a$ and $(3a-2b+8)(b-4)/3\geq (4a-2b+8)+3$. In particular, we are under the hypotheses of Lemma \ref{111}. Thus, since $3a-2b+8\ge 15$, the strict unimodality of $\binom{a+b}{b}_q$ follows by induction from that of $\binom{3a-2b+8+(b-4)/3}{(b-4)/3}_q$, for all $b\ge 19$, as we wanted to show.

The treatment of the case $b\equiv 2$ modulo 3, $b\ge 20$, is analogous so we will omit the details. We only remark here that one considers the partition $\lambda=((b-2)/3,(b-2)/3, (b-2)/3,1,1)$ of $b$, whose contribution to the KOH expansion of  $\binom{a+b}{b}_q$ is:

$$F_{\lambda}(q)=q^{(b-2)(b-5)/3}\binom{3a-2b+10+(b-5)/3}{(b-5)/3}_q\binom{5a-2b+11}{1}_q.$$

The strict unimodality of $\binom{a+b}{b}_q$, for  $b\ge 20$,  then follows in a similar fashion from that of $\binom{3a-2b+10+(b-5)/3}{(b-5)/3}_q$, by  employing Lemma \ref{111} and induction.

Therefore, it remains to show the theorem for $5\le b\le 17$ ($b\neq 15$). We will assume for simplicity that $a\ge 2b+13$, the result being easy to verify directly for the remaining values of $a$. The KOH term contributed by the partition $(b)$ of $b$ in the expansion of $\binom{a+b}{b}_q$ is:

$$F_{(b)}(q)=q^{2\binom{b}{2}}\binom{a+2-b}{b}_q=q^{b(b-1)}\binom{(a-2b+2)+b}{b}_q.$$

Clearly, since $a\ge 2b+13$, we have $b(b-1)< ab/2-a$ and $a-2b+2\ge 15$. Thus, by induction, the strict unimodality of $\binom{(a-2b+2)+b}{b}_q$  implies that of $\binom{a+b}{b}_q$, as desired.
\end{proof}

In Remark 3.6 of \cite{PP,PP2}, the authors also conjectured that, roughly speaking, the difference between  consecutive coefficients of a $q$-binomial coefficient is eventually larger than any fixed integer. As a further nice, and very brief, application of our method, we answer this conjecture in the positive using the KOH identity. (Just notice that unlike in the original formulation of the conjecture, our proof will assume that $b$ is fixed and only $a$ is large enough.)

\begin{proposition} Fix any integer $d\ge 1$. Then there exist integers $a_0,b$ and $L$ such that, if $\binom{a+b}{b}_q=\sum_{i=0}^{ab}c_iq^i$, then $c_i-c_{i-1}\ge d$, for all indices $L\le i\le ab/2$ and for all $a\ge a_0$.
\end{proposition}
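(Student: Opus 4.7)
The plan is to adapt the method used to prove Theorem \ref{str}: rather than finding a single KOH summand that is strictly increasing in the relevant range, I would exhibit $d$ such summands whose strictly increasing ranges all contain a common interval $[L, ab/2]$. Since every KOH summand is unimodal and symmetric about $ab/2$, each is nondecreasing on $[0, ab/2]$, so the remaining summands contribute nonnegatively to $c_i - c_{i-1}$, while the $d$ chosen strictly increasing ones together contribute at least $d$.

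Concretely, for each $j = 1, \dots, d$ I would take the partition $\lambda^{(j)} = (b-j, 1, \dots, 1) \vdash b$ (with $j$ ones). A direct computation from Lemma \ref{koh} yields
\[ F_{\lambda^{(j)}}(q) = q^{(b-j)(b-j-1)} \binom{a-b+j+1}{b-j-1}_q \binom{(j+1)a - 2b + 2j + 3}{1}_q. \]
Choosing $b$ sufficiently large in terms of $d$ (for instance $b \ge 2d+9$, which forces $b-j-1 \ge 8$ and $b > 2d+2$ for all $j \le d$), each inner $q$-binomial $\binom{a-b+j+1}{b-j-1}_q$ has second parameter $\ge 5$ outside the exceptional list of Theorem \ref{str}, so it is strictly unimodal by that theorem once $a$ is sufficiently large. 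Lemma \ref{111} then shows that the product $\binom{a-b+j+1}{b-j-1}_q \binom{(j+1)a-2b+2j+3}{1}_q$ is strictly unimodal in all nonnegative degrees, its hypotheses $t \le cd$ and $t \ne cd-2$ being automatic for large $a$ because the two linear functions of $a$ involved have different slopes when $b > 2d+2$.

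Since each $F_{\lambda^{(j)}}$ is the shift by $q^{(b-j)(b-j-1)}$ of a polynomial strictly increasing from degree $0$ up to its midpoint, $F_{\lambda^{(j)}}$ itself strictly increases from degree $(b-j)(b-j-1)$ up to the common midpoint $ab/2$. Setting $L = (b-1)(b-2)$, the largest of these shifts over $j = 1, \dots, d$, each of the $d$ summands $F_{\lambda^{(j)}}$ contributes at least $1$ to $c_i - c_{i-1}$ for every $L \le i \le ab/2$, yielding $c_i - c_{i-1} \ge d$. The main (largely bookkeeping) obstacle is coordinating the finitely many hypotheses of Theorem \ref{str} and Lemma \ref{111} simultaneously for all $d$ partitions: one picks $b$ first depending only on $d$, then $a_0$ depending on $b$, and no new ideas beyond those already present in the proof of Theorem \ref{str} are required.
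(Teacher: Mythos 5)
Your proposal is correct and follows essentially the same route as the paper: the paper's proof uses exactly the same hook partitions $(b-k,1,\dots,1)$ for $k=1,\dots,d$, arrives at the same formula for $F_{\lambda^{[k]}}(q)$, and combines Lemma \ref{111} with Theorem \ref{str} in the same way, only with the concrete choices $b=2d+4$, $a_0=(d+2)(d+3)+6$, and $L=(b-1)(b-2)+1$. Your slightly different choices of $b$ and $L$ are immaterial.
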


\begin{proof}
Consider the partition $\lambda^{[k]}=(\lambda_1^{[k]}=b-k,\lambda_2^{[k]}=1,\dots, \lambda_{k+1}^{[k]}=1)$ of $b$, where $k\ge 1$. It is easy to see that its contribution to the KOH identity for $\binom{a+b}{b}_q$ is given by:
$$F_{\lambda^{[k]}}(q)=q^{(b-k)(b-k-1)}\binom{a-2b+2k+2+(b-k-1)}{b-k-1}_q\binom{(k+1)(a+2)-2b+1}{1}_q.$$

Set for instance $b=2d+4$ and  $a_0=(d+2)(d+3)+6$, where we can assume $d\ge 2$. A standard computation gives  that, for any $a\ge a_0$ and  $k\le b/2-2=d$,  we are under the hypotheses of Lemma \ref{111}. Hence, by Theorem \ref{str}, each polynomial $F_{\lambda^{[k]}}(q)$ is strictly unimodal from degree $(b-k)(b-k-1)$ on, and the theorem now immediately follows  by choosing  $L=(b-1)(b-2)+1=4d^2+10d+7$ and considering the coefficients of $\sum_{k=1}^dF_{\lambda^{[k]}}(q)$.
\end{proof}

\section{Acknowledgements} The idea of this paper originated during a visit to UCLA in October 2013, for whose invitation we warmly thank Igor Pak. We wish to thank the referee for a careful reading of our manuscript and for helpful suggestions that improved the presentation, and Igor Pak, Greta Panova, and Richard Stanley for several helpful discussions. We are also very grateful to Doron Zeilberger for comments (and for calling this a ``proof from the book''). This work was done while the author was partially supported by a Simons Foundation grant (\#274577).

\end{document}